\crefname{lemma}{Lemma}{Lemmas}
\crefname{theorem}{Theorem}{Theorems}
\crefname{prop}{Proposition}{Propositions}
\crefname{cor}{Corollary}{Corollaries}
\newtheorem{theorem}{Theorem}
\newtheorem{lemma}[theorem]{Lemma}
\newtheorem{defn}[theorem]{Definition}
\numberwithin{equation}{section}
\numberwithin{lemma}{section}
\numberwithin{theorem}{section}
\numberwithin{cor}{section}
\numberwithin{defn}{section}
\begin{document}

\date{}

\author{Martin Ortiz Ramirez}
\title{Lattice points in d-dimensional spherical segments}

\maketitle

\begin{abstract}
We study lattice points in $d$-dimensional spheres, and count their number in thin spherical segments. We found an upper bound depending only on the radius of the sphere and opening angle of the segment. To obtain this bound we slice the segment by hyperplanes of rational direction, and
 then cover an arbitrary segment with one having rational direction. Diophantine approximation can be used to obtain the best rational direction possible.   \\
 {\bf Keywords:} discrete geometry, lattice points on spheres, Diophantine approximation. \\
 {\bf MSC(2010):}  11P21, 11K60.

\end{abstract}
\medskip

\section{Introduction}

\indent The topic of lattice points in $d$-spheres has been extensively studied for centuries, starting with the two dimensional case of Gauss's circle problem, which asks for the number of lattice points $N(r)$ inside a circle of radius $r$, with  $r^2=n$ an integer. Gauss already knew that by geometric 
  considerations $N(r)$ is close to the area of the circle $\pi r^2$, and the problem transitioned to the study of the asymptotic behaviour of $E(r)=\lvert N(r)-\pi r^2\rvert$. One natural question asks for the the infimum $\mu$ of the $\alpha$ such that $E(r)=\mathcal{O}(r^\alpha)$. Gauss proved that $\mu \le 1$ by considering the length of the circumference, Sierpinski improved it to $\mu \le 2/3$, and Van der Corput showed that $\mu < 2/3$ 
   \cite[p.20-22]{grosswald}. They also conjectured that $\mu=1/2$. The best upper bound to date is $\mu \le 517/824$ by Bourgain and Watt \cite{bourgain_gauss}. 
     As for a lower bound, Hardy and Landau independently proved that $\mu \ge 1/2$. 

\par

Gauss's circle problem can be interpreted by considering the sum of all the $r_2(m)$ for $m \le n$, where $r_2(m)$ is the number of integral solutions to $x^2+y^2=m$. Unlike Gauss's problem, where obtaining the correct order of magnitude is easy, it is more difficult for $r_2(m)$, since it vanishes for arbitrarily large $m$. 
Relating $r_2(m)$ to the number of divisors of $m$ \cite[Ch 16.10]{hardy-wright} yields $r_2(m) \ll m^\epsilon$ for any $\epsilon >0$  \footnote{Here and in what follows $f(m)\ll g(m)$ means that $f(m)/g(m)$ is bounded as $m$ tends to infinity.}. Consider the number of lattice points on a short circular arc. Cilleruelo and Cordoba \cite{cilleruelocordoba} proved that on a circle of radius $R$, an arc of length no greater than $\sqrt{2}R^{1/2-1/(4\lfloor l/2\rfloor+2)}$  contains at most $l$ lattice points. 
Cilleruelo and Granville \cite{cilleruelogranville} proved that on an arc of length 
 less than $(40+\frac{40}{3}\sqrt{10})^{1/3}R^{1/3}$ there are at most $3$ lattice points, an improvement on a result by Jarnik \cite{jarnik}, that in an arc of length less than or equal to than $R^{1/3}$ there are at most two lattice points.

\par

Moving to $3$ dimensions, $S(R)$ denoting the number of lattice points inside a sphere of radius $R$, we have the analogue of Gauss's circle problem. By the same geometric considerations $S(R)$ is of the order of $\frac{4}{3}\pi R^3$, the volume of the sphere. Hence we are interested in $E(R)=\lvert \frac{4}{3}\pi R^3 - S(R) \rvert$. Szego \cite{szego} proved that $E(R)=o(R(logR)^{1/2})$ is not true \footnote{Here and elsewhere f(R)=o(g(R)) means that f(R)/g(R) tends to $0$ as $R$ tends to infinity.}. Regarding upper bounds of the form $E(R)=\mathcal{O}(R^\theta)$, Heath-Brown \cite{heathbrownsphere} proved that it holds true with $\theta= 21/16$. 

\par 

As in the two dimensional case, the number $r_3(R^2)$ of lattice points on the sphere of radius $R$, $R^2=n$ still vanishes for arbitrarily large $n$, of the form $4^a(8k+7)$ (Legendre's theorem \cite[Ch. 4]{grosswald}), although Walfisz \cite{walfisz} proved that $r_3(R^2) \gg R\log{\log{R}}$  holds for infinitely many $n$. We also have the upper bound $r_3(R^2) \ll R^{1+\epsilon}$ for any $\epsilon >0$ and the lower bound $r_3(R^2) \gg R^{1-\epsilon}$ for $n \neq 0,4,7 \; \text{mod} \; 8$ \cite[§1]{BSR} i.e. when there are primitive solutions to $x^2+y^2+z^2=n$ (solutions where the greatest common factor of $x$, $y$ and $z$ is $1$). The distribution of lattice points on the sphere has also been a topic of study. In the 1950s Linnik proved that as $n$ tends to infinity amongst $n$ square-free and $n\equiv \pm 1 \; \text{mod}\;5$ \footnote{In fact, Linnik aslo proved the statement replacing the constraint mod $5$ by a similar one mod $p$, for any fixed prime $p$.}, the projections of the lattice points onto the unit sphere becomes equidistributed. Duke \cite{Duke1988} proved the result removing the constraint $n \equiv \pm 1 \; \text{mod} \; 5$ after thirty years. 

\par
A three dimensional analogue of an arc is a \textit{cap}, the surface obtained by cutting the sphere with a plane, or equivalently, the intersection of $RS^2$ and sphere of radius $\lambda$ centered at a point of the sphere. Bourgain and Rudnick \cite{rudnick} proved using a theorem by Jarnik \cite{jarnik} that the maximal number $F_3(R,\lambda)$ of lattice points in a cap of radius $\lambda$ satisfies 
\[F_3(R,\lambda) \ll R^{\epsilon}\left(1+\frac{\lambda^2}{R^{1/2}}\right) \;\; \text{for all} \; \epsilon >0.\]
 This is only nontrivial 
  for small caps having $\lambda \ll R^{1/2}$, because other methods already give $F_3(R,\lambda) \ll R^{\epsilon}(1+\lambda)$ \cite[Lemma 2.2]{rudnick}. 
   For bigger caps Bourgain and Rudnick also proved \cite[Proposition 1.3]{rudnick} 
 \[F_3(R,\lambda) \ll R^{\epsilon}\left(1+\lambda \left(\frac{\lambda}{R}\right)^{\eta}\right) \;\; \text{for any} \;\; 0 < \eta < \frac{1}{15}.\]  
\indent If we slice the sphere by two parallel planes we obtain a \textit{spherical segment}. Maffucci \cite[Proposition 6.2]{maffucci} gave a bound (see the paragraph before \Cref{mainprop})
  for the number of lattice points in segments given their opening angle, a theorem that we generalise in this paper.

\par
The four dimensional case still shows an erratic behaviour, for instance $r_4(2^n)=24$ for all $n$. For $d$ greater than $4$ the behaviour of $r_d$ is much nicer, as the order of magnitude of $r_d$ is known to be $r_d(R^2)\approx R^{d-2}$, via the circle method \cite[Chapters 10-12]{grosswald}. By similar methods Bourgain-Rudnick \cite[Appendix A]{rudnick} also proved bounds for the number of lattice points in $d$-dimensional spherical caps: 
\[F_d(R,\lambda) \ll R^{\epsilon}\left(\frac{\lambda^{d-1}}{R}+\lambda^{d-3}\right) \; \text{for} \; d \ge 5.\]
The proof for $d\ge 5$ is considerably different from its three dimensional analogue. The former uses analytic tools such as the circle method. The latter is more geometric in nature, and uses Diophantine approximation, in a similar way to this paper. 

\par

While interesting in its own right, lattice points problems  have found applications in other fields. Recently they have been applied to the study of arithmetic waves, with results by Oravecz-Rudnick-Wigman \cite{leray}, Krishnapur-Kurlberg-Wigman \cite{wigman}, Rossi-Wigman \cite{rossi-wigman} and Benatar-Maffucci \cite{benatar-maffucci} among others.

Before stating the main results, we first need to define some key concepts. Let $S^{d-1}$ denote the unit $(d-1)$-sphere in $\mathbb{R}^d$, and $\overline{B}(\alpha,r)$ the closed ball of radius $r$ around $\alpha \in \mathbb{R}^d$. For a vector $\beta \in \mathbb{R}^d$, $\lvert \beta \rvert$ will denote its euclidean norm.
\begin{defn} \label{def1}
A spherical cap $T\subseteq RS^{d-1}$ of direction $\beta \in \mathbb{R}^d$, $\lvert \beta \rvert=1$, and radius $r$ is defined as 
\[
T=RS^{d-1} \cap \overline{B}(R\beta,r).
\]
We then define a spherical segment $ S \subseteq RS^{d-1}$ as $S=T_1\setminus T_2$, where each $T_{i}$ is a spherical cap of direction $\beta$ and radius $r_{i}$, $r_1>r_2$. We also say that the direction of any of the $T_i$ is the direction of $S$.
\end{defn}

 We will define $RS^{d-1} \cap \{x\in \mathbb{R}^d : \lvert x-R\beta \rvert=r\}$ as the $\textit{base}$ of $T$, we will see in the next section that it is a $(d-2)$-sphere in a hyperplane orthogonal to $\beta$. In a $(d-1)$-sphere with centre $O$ and radius $r$, two points $P_1,P_2$ are said to be $\textit{antipodal}$ if $P_1=O+v$, $P_2=O-v$, where $v$ has norm $r$. Hence the following definition. 
\begin{defn} \label{def2}
The \textit{opening angle} of a spherical cap is $\angle{POQ}$, where $O$ is the origin and $P$,$Q$ are antipodal points 
 of the $(d-2)$-sphere that is the base. The \textit{opening angle $\theta$ of a segment} $S$ is $\theta_1-\theta_2$, where $\theta_i$ is the opening angle of $T_i$.
\end{defn}
 We define $\psi(R,\theta)$ to be the maximal number of lattice points in segments $T \subseteq RS^{d-1}$ of opening angle $\theta$.
Our method will involve slicing the segment by hyperplanes. In this context, $\kappa_d(R)$ \footnote{To the best of my knowledge, an upper bound for $\kappa_d$ when $d\ge 4$ has not been explicitly studied in the literature, but a bound like $\kappa_d(R) \ll R^{d-3+\epsilon}$ would be expected to be amenable to the circle method for $d\ge 5$, since the intersection of a hyperplane and a $(d-1)$-sphere is a $(d-2)$-sphere.} is the maximal number of integer points in the intersection of $RS^{d-1}$ and a \textit{rational hyperplane}, that is, a hyperplane that can be defined by a the equation of the form $a_1x_1+a_2x_2+\ldots a_d x_d=a$, for $a_i \in \mathbb{Z}$. We are now in position to state the main results of the paper. The first one is a generalisation of the following result by Maffucci \cite[Proposition 6.2] {maffucci} for the three dimensional case:
\[ \psi \ll \kappa_3(R)(1+R\theta^{1/3}) \;\; \text{as} \;\; \theta \to 0 \text{ along with } R \to \infty.\]
In all future statements $\Psi(R,\theta) \ll f(R,\theta)$ as $\theta \to 0$ will mean that $\theta$ tends to $0$ together with $R \to \infty$. Furthermore, unless otherwise stated, the result is understood to be uniform, that is, the implied constant does not depend on $R$ or $\theta$.
\begin{theorem} \label{mainprop}
Let $\psi(R,\theta)$ be the maximal number of lattice points on spherical segments in $RS^{d-1}$ with opening angle $\theta$. Then 
\[\psi \ll \kappa_d(R)(1+R\theta^{1/d})\]
as $\theta \to 0$.
\end{theorem}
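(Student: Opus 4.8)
The plan is to reduce counting lattice points in an arbitrary spherical segment to counting them on rational hyperplane slices, where we already have the bound $\kappa_d(R)$. The main difficulty is that the segment's direction $\beta$ is an arbitrary unit vector, so the segment is not aligned with any rational hyperplane. The strategy is: (1) understand the geometry relating opening angle $\theta$ to the Euclidean thickness of the segment, (2) choose a good rational direction $\beta'$ approximating $\beta$ via Diophantine approximation, (3) slice by hyperplanes orthogonal to $\beta'$ at consecutive integer levels, and (4) enclose the original segment inside a bounded number of such slabs, so that the total count is at most (number of slices) $\times \kappa_d(R)$.

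First I would translate the opening angle into a width. If a cap of direction $\beta$ and radius $r$ has opening angle $\theta_i$, elementary trigonometry on the triangle $\triangle POQ$ (with $O$ the origin, $P,Q$ antipodal on the base, which lies in a hyperplane orthogonal to $\beta$ at signed distance $R\cos(\theta_i/2)$ from $O$) shows the base is a $(d-2)$-sphere at height $h_i = R\cos(\theta_i/2)$ along the $\beta$-axis. Hence the segment $S=T_1\setminus T_2$ is exactly the set of points of $RS^{d-1}$ whose $\beta$-coordinate lies in the interval $[R\cos(\theta_1/2),\,R\cos(\theta_2/2)]$, an interval of length $\Delta = R\bigl(\cos(\theta_2/2)-\cos(\theta_1/2)\bigr)$. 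Since $\theta=\theta_1-\theta_2$ and $\theta\to 0$, a first-order expansion gives $\Delta \asymp R\,\theta\sin(\theta_{\mathrm{avg}}/2)\le R\theta$, so the slab containing $S$ has $\beta$-width $O(R\theta)$.

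Next comes the Diophantine step, which I expect to be the crux. I want a rational unit direction $\beta'$, i.e. $\beta'=\mathbf{a}/|\mathbf{a}|$ with $\mathbf{a}\in\mathbb{Z}^d$, that is very close to $\beta$ in angle, because slicing by hyperplanes $\mathbf{a}\cdot x = k$ (for consecutive integers $k$) produces rational hyperplanes on each of which there are at most $\kappa_d(R)$ lattice points. By simultaneous Dirichlet approximation I can find $\mathbf{a}$ with denominator (equivalently $|\mathbf{a}|$) controlled so that $|\beta-\beta'|$ is small; the key tension is that a smaller approximation error forces a larger $|\mathbf{a}|$, and the spacing between consecutive rational hyperplanes $\mathbf{a}\cdot x=k$ along $\beta'$ is $1/|\mathbf{a}|$. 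I would then choose the quality of approximation to balance two sources of error: the slab of $\beta$-width $O(R\theta)$ must be covered by hyperplanes of direction $\beta'$, and the misalignment between $\beta$ and $\beta'$ over the sphere of radius $R$ contributes an extra transverse displacement of order $R|\beta-\beta'|$. Optimising, one takes the approximation error $|\beta-\beta'|\asymp\theta^{(d-1)/d}$ (achievable with $|\mathbf{a}|\asymp\theta^{-(d-1)/d}$ by Dirichlet), so that the number of integer levels $k$ needed to cover the enlarged slab is
\[
\frac{|\mathbf{a}|\cdot\bigl(R\theta + R|\beta-\beta'|\bigr)}{1} \ll |\mathbf{a}|\,R\theta + |\mathbf{a}|\,R\,\theta^{(d-1)/d} \ll R\,\theta^{1/d},
\]
which is exactly the exponent in the statement.

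Finally I would assemble the bound. Every lattice point of $S$ lies in the enlarged slab $\{R\cos(\theta_1/2)-c R|\beta-\beta'| \le \beta'\cdot x \le R\cos(\theta_2/2)+cR|\beta-\beta'|\}$, and since all lattice points have integer coordinates, each one lies on one of the rational hyperplanes $\mathbf{a}\cdot x=k$ for an integer $k$ in a range of length $O(|\mathbf{a}|R\theta^{(d-1)/d})=O(R\theta^{1/d})$. Each such hyperplane meets $RS^{d-1}$ in at most $\kappa_d(R)$ lattice points, giving
\[
\psi(R,\theta)\;\ll\;\kappa_d(R)\bigl(1 + R\,\theta^{1/d}\bigr),
\]
where the additive $1$ absorbs the case $\theta$ so small that a single hyperplane suffices (ensuring the bound is nontrivial for all $\theta$). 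The main obstacle is verifying the Diophantine approximation can be chosen with the stated simultaneous control on both $|\beta-\beta'|$ and $|\mathbf{a}|$ uniformly in $\beta$, and checking that the geometric enlargement of the slab under the change of direction is genuinely bounded by $O(R|\beta-\beta'|)$ rather than something larger; these two estimates together pin down the exponent $1/d$.
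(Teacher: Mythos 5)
Your overall strategy coincides with the paper's: bound the $\beta$-width of the segment by $O(R\theta)$, pass to a rational direction $a/\lvert a\rvert$ at the cost of enlarging the slab by $O(R\lvert\beta-a/\lvert a\rvert\rvert)$, count the rational hyperplanes $a\cdot x=k$ (spaced $1/\lvert a\rvert$ apart) that meet the enlarged slab, and balance the two error terms via Dirichlet. The geometric half of your argument is correct and matches \Cref{lem:lemma1} and \Cref{lem:lemma2}: the bases do lie at heights $R\cos(\theta_i/2)$, the width is $\ll R\theta$, and the transverse displacement caused by tilting the slicing direction is at most $R\lvert\beta-\beta'\rvert$ on $RS^{d-1}$.

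The Diophantine step, however, contains a genuine error. You pair $\lvert a\rvert\asymp\theta^{-(d-1)/d}$ with $\lvert\beta-\beta'\rvert\asymp\theta^{(d-1)/d}$; with these values the product $\lvert a\rvert\cdot\lvert\beta-\beta'\rvert\asymp 1$, so the second term in your display is $\lvert a\rvert R\lvert\beta-\beta'\rvert\asymp R$, and the claimed inequality $\lvert a\rvert R\,\theta^{(d-1)/d}\ll R\theta^{1/d}$ is false as $\theta\to 0$. Moreover, Dirichlet does not let you prescribe $\lvert a\rvert\asymp H^{d-1}$: it only guarantees \emph{some} denominator $q\le H^{d-1}$, which may be as small as $1$. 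The statement you actually need (the paper's \Cref{lem:lemma3}) is that the angular error scales inversely with whatever $\lvert a\rvert$ Dirichlet returns, namely $\lvert a\rvert\ll H^{d-1}$ together with $\lvert\beta-a/\lvert a\rvert\rvert\ll 1/(\lvert a\rvert H)$, so that the product $\lvert a\rvert\cdot\lvert\beta-a/\lvert a\rvert\rvert\ll 1/H$ uniformly in $\beta$. The hyperplane count is then $\ll 1+RH^{d-1}\theta+R/H$, and $H=\lceil\theta^{-1/d}\rceil$ makes both terms $\ll R\theta^{1/d}$. This product form is the crux: the weaker reading of Dirichlet, ``error $\le 1/H$ with $\lvert a\rvert\le H^{d-1}$,'' would only give $\lvert a\rvert R/H\ll RH^{d-2}$, which diverges for $d\ge 3$. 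Once you replace your fixed choices of $\lvert a\rvert$ and $\lvert\beta-\beta'\rvert$ by this coupled bound, your argument becomes exactly the paper's proof.
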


We can improve \Cref{mainprop} if we are given more information about the direction of the segment.
\begin{defn} \label{def3}
For $\beta \in \mathbb{R}^d$, $\beta$ is said to have $s$ \textit{rational quotients} if there exists a non-zero $k \in \mathbb{R}$ such that
$k\beta$
has exactly $s+1$ rational coordinates, and this number of rational coordinates is maximal amongst all the non-zero $k$. 
\end{defn}
We then fix some direction $\beta$ and ask for the number of lattice points in segments of radius $R$ and opening angle $\theta$. The case $s=0$ is already covered in \Cref{mainprop}.

\begin{theorem} \label{thm1.2}
Let $S\subseteq RS^{d-1}$ be a spherical segment of opening angle $\theta$ and direction $\beta$ having $s$ rational quotients, $1\le s\le d-1$. Let $\psi(R,\theta,\beta)$ be the number of lattice points in $S$. Then 
\[
\psi \ll_\beta \kappa_d(R)\left(1+R\theta^{\frac{1}{d-s}}\right)
\]
as $\theta \to 0$. \footnote{Here $\ll_\beta$ means that the implied constant depends on $\beta$.}

\end{theorem}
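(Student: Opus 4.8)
The plan is to adapt the slicing strategy behind \Cref{mainprop}, improving only the Diophantine input. I would fix a primitive integer vector $a\in\mathbb{Z}^d$ and set $\beta'=a/\lvert a\rvert$, a rational direction making a small angle $\delta$ with $\beta$. Since every lattice point $x\in S$ satisfies $\langle a,x\rangle\in\mathbb{Z}$, the points of $S$ are distributed among the rational hyperplanes $\{\langle a,x\rangle=c\}$, $c\in\mathbb{Z}$, and each such slice $RS^{d-1}\cap\{\langle a,x\rangle=c\}$ carries at most $\kappa_d(R)$ lattice points. Thus $\psi\le\kappa_d(R)\cdot\#\{\text{slices meeting }S\}$, and everything reduces to (i) bounding the number of slices geometrically and (ii) choosing $a$ well.

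For step (i), I would parametrise the segment by the polar angle $\phi$ measured from $\beta$, writing $x=R(\cos\phi\,\beta+\sin\phi\,u)$ with $u\perp\beta$. Relating the opening angle to the rim angle gives $\phi\in[\phi_2,\phi_1]$ with $\phi_1-\phi_2=\theta/2$. Writing $\beta'=\cos\delta\,\beta+\sin\delta\,w$, one computes $\langle\beta',x\rangle=R\cos\delta\cos\phi+R\sin\delta\sin\phi\,\langle w,u\rangle$ with $\langle w,u\rangle\in[-1,1]$. The range $W$ of this functional over $S$ then splits into a thickness term $\ll R\theta$ (from varying $\phi$) and a tilt term $\ll R\delta$ (from varying $u$), uniformly in the position of the segment since $\lvert\sin\phi\rvert\le 1$; hence $W\ll R(\theta+\delta)$. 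The number of integers $c$ with $\{\langle a,x\rangle=c\}\cap S\neq\emptyset$ is at most $\lvert a\rvert W+1$, so
\[
\psi\;\ll\;\kappa_d(R)\bigl(1+R\theta\,\lvert a\rvert+R\delta\,\lvert a\rvert\bigr).
\]

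Step (ii) is where the rational quotients enter and is the main obstacle. By hypothesis some $k\beta$ has $s+1$ rational coordinates, so after clearing denominators I can take $a$ to match that rational block exactly and to approximate only the remaining $m:=d-s-1$ irrational coordinates $\gamma_1,\dots,\gamma_m$. Applying Dirichlet's simultaneous approximation theorem with parameter $N$ yields an integer $q\le N^{m}$ with $\max_j\lVert q\gamma_j\rVert\le 1/N$; the resulting $a$ satisfies $\lvert a\rvert\asymp_\beta q$ and $\delta\ll_\beta 1/(N\lvert a\rvert)$. The decisive point is the cancellation $R\delta\lvert a\rvert\ll_\beta R/N$, which removes the dependence on the uncontrolled size of $q$; combined with $\lvert a\rvert\ll_\beta N^{m}$ this gives
\[
\psi\;\ll_\beta\;\kappa_d(R)\bigl(1+R\theta\,N^{\,d-s-1}+R/N\bigr).
\]

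Finally I would optimise in $N$: balancing the two $R$-terms forces $N\asymp\theta^{-1/(d-s)}$, for which both terms are of size $R\theta^{1/(d-s)}$, yielding $\psi\ll_\beta\kappa_d(R)\bigl(1+R\theta^{1/(d-s)}\bigr)$ as $\theta\to0$. The verification that the effective approximation dimension is exactly $m=d-s-1$, so that the Dirichlet exponent is $1/m$ rather than $1/(d-1)$, is the crux of the improvement over \Cref{mainprop}; the geometric estimate of the tilt term, while elementary, must be carried out with enough care that the implied constants stay independent of the segment's position on the sphere.
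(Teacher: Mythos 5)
Your proposal is correct and follows essentially the same route as the paper: clear denominators on the rational block of $\beta$, apply Dirichlet only to the remaining $d-1-s$ coordinates so that $\lvert a\rvert\ll_\beta H^{d-1-s}$ while the angular error is $\ll_\beta 1/(\lvert a\rvert H)$, slice by the rational hyperplanes $\langle a,x\rangle=c$, and balance $H\asymp\theta^{-1/(d-s)}$. The only immaterial difference is in how the slice count is bounded: you estimate the oscillation of the functional $\langle a,x\rangle$ over $S$ directly, whereas the paper (Lemma \ref{lem:lemma2}) encloses $S$ in a slightly larger segment of direction $a/\lvert a\rvert$ and computes its height via \eqref{height} — both give $\psi\ll\kappa_d(R)\bigl(1+R\lvert a\rvert(\theta+\delta)\bigr)$.
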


Next, we prove some general geometric facts about segments that will be useful later on. On Section $3$ we will state all the relevant lemmas and prove \Cref{mainprop}. Section $4$ is devoted to the proof of such lemmas and we prove \Cref{thm1.2} on Section $5$.

\section{Some geometric considerations}
In this section we introduce a few geometric features of spherical segments that will be used later on. Recall \Cref{def1}, we now prove that the base of each $T_i$ is indeed a $(d-2)$-sphere. Let $v \cdot w$ denote the dot product of two vectors in $\mathbb{R}^d$. The intersection of $\{ \lvert x - R\beta \rvert =r_i \}$ and $\{\lvert x \rvert=R\}$ lies in the plane $\beta \cdot x=R-\frac{r_i^2}{2R}\coloneqq \lambda_i$ since 
\[\lvert x - R\beta \rvert^2=R^2+ \lvert x \rvert^2-2R\beta \cdot x=r_i^2\]
 holds on the intersection.
Then 
\[\lvert \lambda_i\beta-x\rvert^2=\lambda_i^2+R^2-2\lambda_i\beta \cdot x=R^2-\lambda_i^2=r_i^2-\frac{r_i^4}{4R^2},\]
 so that $S$ has two bases $B_1$ and $B_2$ that are $(d-2)$-spheres lying on the hyperplanes $\beta \cdot x = \lambda_i$
 , of radii 
 \[k_i^2\coloneqq r_i^2-\frac{r_i^4}{4R^2}.\] 
\indent In \Cref{def2}, $POQ$ is an isosceles triangle of side lengths $R$ and $2k_i$, so $\theta$ is well-defined.
Because $P$ and $Q$ are antipodal points, $O, P, Q$ and $R\beta$ are coplanar, hence by basic geometry we have that 
\begin{equation} \label{rtheta}
r_i=2R\sin(\theta_i/4).
\end{equation}
Another parameter that will be useful is the \textit{height} of the segment, the distance between the two bases $B_1$ and $B_2$ 
  \begin{equation} \label{height}
  h=\left \lvert \frac{\beta\lambda_1}{\lvert \beta \rvert^2}-\frac{\beta\lambda_2}{\lvert \beta \rvert^2}\right \rvert=\frac{1}{\lvert \beta \rvert}\lvert \lambda_1-\lambda_2\rvert=\frac{1}{2R}(r_1^2-r_2^2).
  \end{equation}

\section{Proof of Theorem 1.3}

We will first state the necessary lemmas, that will be proved in the next section. 
\begin{lemma}
\label{lem:lemma1}
Let $S\subseteq RS^{d-1}$ be a spherical segment of direction $\frac{b}{\lvert b \rvert}$, with $b \in \mathbb{Z}^d$ and height $0\le h\le 2R$. Then 
\[\psi \le \kappa_d(R)(1+\lvert b \rvert h).\]
\end{lemma}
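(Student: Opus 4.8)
The plan is to slice the segment by the rational hyperplanes on which its lattice points are forced to lie, and then count slices. First I would record the slab description of $S$ coming from Section 2: writing $\beta = b/\lvert b\rvert$, the segment lies between its two bounding hyperplanes $\beta\cdot x=\lambda_1$ and $\beta\cdot x=\lambda_2$, where $\lambda_i = R - r_i^2/(2R)$ and $\lvert\lambda_1-\lambda_2\rvert = h$ by the height formula \eqref{height}. Concretely, a point $x\in RS^{d-1}$ lies in the cap $T_i$ exactly when $\beta\cdot x \ge \lambda_i$ (since $\lvert x-R\beta\rvert^2 = 2R^2 - 2R\,\beta\cdot x$ on the sphere), so every point of $S=T_1\setminus T_2$ satisfies $\lambda_{\min}\le \beta\cdot x\le\lambda_{\max}$, a slab of width $h$.

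The key observation is that for a lattice point $x\in\mathbb{Z}^d$ the quantity $m:=b\cdot x$ is an integer, and $\beta\cdot x = (b\cdot x)/\lvert b\rvert$. Hence the slab condition forces $m$ into the interval $[\,\lvert b\rvert\lambda_{\min},\ \lvert b\rvert\lambda_{\max}\,]$, which has length $\lvert b\rvert h$ and therefore contains at most $\lvert b\rvert h + 1$ integers. For each admissible value of $m$, the lattice points of $S$ with $b\cdot x = m$ all lie in the intersection of $RS^{d-1}$ with the hyperplane $b\cdot x = m$; this hyperplane is rational because $b\in\mathbb{Z}^d$ and $m\in\mathbb{Z}$, so by the definition of $\kappa_d(R)$ that intersection carries at most $\kappa_d(R)$ lattice points. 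Summing over the at most $\lvert b\rvert h + 1$ possible values of $m$ gives
\[
\psi \le (\lvert b\rvert h + 1)\,\kappa_d(R) = \kappa_d(R)\bigl(1 + \lvert b\rvert h\bigr),
\]
as required.

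This is essentially a counting estimate and I expect no serious obstacle. The two points needing care are the slab description of the segment, which Section 2 already supplies, and the elementary but load-bearing fact that $\beta\cdot x$ takes values in $\tfrac{1}{\lvert b\rvert}\mathbb{Z}$ on lattice points precisely because the chosen direction is rational — this rationality is exactly what turns each level set $\{b\cdot x = m\}$ into a rational hyperplane and is where the hypothesis $b\in\mathbb{Z}^d$ enters. The constraint $0\le h\le 2R$ is only there to ensure the segment is well-defined and is not otherwise used in the count.
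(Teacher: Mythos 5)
Your proposal is correct and follows essentially the same route as the paper: both arguments observe that the lattice points of $S$ lie on the rational hyperplanes $b\cdot x=n$, $n\in\mathbb{Z}$, that the segment is a slab of width $h$ meeting at most $1+\lvert b\rvert h$ such hyperplanes (consecutive ones being distance $1/\lvert b\rvert$ apart), and that each carries at most $\kappa_d(R)$ lattice points. Your write-up merely makes the slab description and the integrality of $b\cdot x$ more explicit than the paper does.
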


\Cref{lem:lemma1}, although simple, provides the best 
 upper bound using slicing methods for a segment of rational direction; we will use it to prove the next lemma. \Cref{lem:lemma1} will be proved in the next section.
\begin{lemma} 
\label{lem:lemma2}
Let $S\subseteq RS^{d-1}$ be a spherical segment with direction $\beta$ and opening angle $\theta$. For any $a\in \mathbb{Z}^d$, the maximal number of lattice points lying on $S$ satisfies 
\[\psi \ll \kappa_d(R)(1+R\lvert a \rvert(\theta+\phi)) \;\;\; \text{as} \;\; \theta \to 0 \]
where $0\le \phi \le \pi$ is the angle between $\beta$ and $a$. The implied constant is absolute. 
\end{lemma}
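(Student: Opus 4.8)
The plan is to cover $S$ by a spherical segment $S'$ whose direction is the \emph{rational} unit vector $u=a/\lvert a\rvert$, and then invoke \Cref{lem:lemma1}. Since $u$ is a positive multiple of $a$, the angle between $u$ and $\beta$ is still $\phi$, so $\lvert u-\beta\rvert=2\sin(\phi/2)\le\phi$. Writing points of $RS^{d-1}$ as $x=R\omega$ with $\lvert\omega\rvert=1$, and using the reformulation of the caps $T_i$ through the hyperplanes $\beta\cdot x=\lambda_i=R\cos(\theta_i/2)$ of Section 2, the segment $S$ is exactly the set of $\omega$ with $\beta\cdot\omega\in[\cos(\theta_1/2),\cos(\theta_2/2))$, where $\theta_1=\theta_2+\theta$. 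Hence, as $\omega$ runs over $S$, the quantity $\beta\cdot\omega$ sweeps an interval of length $\cos(\theta_2/2)-\cos(\theta_1/2)=2\sin(\tfrac{\theta_1+\theta_2}{4})\sin(\tfrac{\theta}{4})\le 2\sin(\theta/4)\le\theta/2$.

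Next I would transfer this range to the direction $u$. The crucial observation is the purely linear estimate $\lvert u\cdot\omega-\beta\cdot\omega\rvert=\lvert(u-\beta)\cdot\omega\rvert\le\lvert u-\beta\rvert\le\phi$, valid for every unit vector $\omega$; this is what lets me avoid spherical-triangle bookkeeping and any ``wrap-around'' issue when the tilted caps are large. Combining it with the previous paragraph, $u\cdot\omega$ ranges over an interval of length at most $\theta/2+2\phi\le 2(\theta+\phi)$ on $S$. Therefore $S$ is contained in the segment $S'$ of direction $u$ cut out by the two hyperplanes $u\cdot x=c_{\min}$ and $u\cdot x=c_{\max}$, where $c_{\min},c_{\max}$ are the extreme values of $R\,u\cdot\omega$ on $S$ (clamped to $[-R,R]$ so that the hyperplanes genuinely meet the ball and $S'$ is a bona fide segment). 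Its height is then $h'=c_{\max}-c_{\min}\le 2R(\theta+\phi)$, and trivially $h'\le 2R$, so the hypotheses of \Cref{lem:lemma1} are met.

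Finally, applying \Cref{lem:lemma1} to $S'$ with $b=a\in\mathbb{Z}^d$ yields
\[
\psi\le\psi(S')\le\kappa_d(R)\bigl(1+\lvert a\rvert h'\bigr)\le\kappa_d(R)\bigl(1+2\lvert a\rvert R(\theta+\phi)\bigr),
\]
which is the claimed bound with an absolute implied constant. Because $\sin t\le t$ for $t\ge 0$, every inequality above is in fact uniform in $R,\theta,\phi$, so the hypothesis $\theta\to0$ is not strictly needed and is kept only to match the statement. The step I expect to be the most delicate is the covering itself: I must check that the two clamped hyperplanes really are orthogonal to $u$ and that $S\subseteq S'$, so that \Cref{lem:lemma1} applies verbatim; everything else is a one-line trigonometric estimate, the only subtle point being the replacement of a spherical estimate by the linear bound $\lvert(u-\beta)\cdot\omega\rvert\le\lvert u-\beta\rvert$.
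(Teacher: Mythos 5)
Your proof is correct, and while it follows the same overall strategy as the paper --- cover $S$ by a segment $S'$ of direction $a/\lvert a\rvert$ and then apply \Cref{lem:lemma1} --- the way you carry out the covering is genuinely different and somewhat cleaner. The paper builds $S'=T_1'\setminus T_2'$ by explicitly prescribing modified radii $r_2'=r_2-2R\sin(\phi/2)$ and $r_1'^2=r_1^2+4Rr_1\sin(\phi/2)+4R^2\sin^2(\phi/2)$, verifies $T_2'\subseteq T_2$ and $T_1\subseteq T_1'$ by expanding $\lvert R\beta-(Ra/\lvert a\rvert+v)\rvert^2$, and then Taylor-expands the height formula $h'=\frac{1}{2R}(r_1'^2-r_2'^2)$ in $\theta$, which is why its conclusion carries the qualifier ``as $\theta\to0$.'' You instead use the slab description of $S$ from Section 2 (namely $\beta\cdot\omega\in[\cos(\theta_1/2),\cos(\theta_2/2))$, an interval of length $2\sin(\tfrac{\theta_1+\theta_2}{4})\sin(\theta/4)\le\theta/2$) and the Cauchy--Schwarz bound $\lvert(u-\beta)\cdot\omega\rvert\le\lvert u-\beta\rvert=2\sin(\phi/2)\le\phi$ to control the width of the projection of $S$ onto $u$ directly, getting $h'\le 2R(\theta+\phi)$ with an explicit constant and no asymptotics. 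This buys a uniform statement valid for all $\theta,\phi$ rather than only in the limit, at the cost of nothing; your computation of the interval length even reproduces the exact height $2R\sin(\tfrac{\theta_1+\theta_2}{4})\sin(\theta/4)$ that the paper only recovers approximately. The only points worth making explicit in a final write-up are the two small technicalities you already flag: that the closed slab $\{c_{\min}\le u\cdot x\le c_{\max}\}$ intersected with $RS^{d-1}$ is a legitimate segment in the sense of \Cref{def1} (it is, since every half-space $u\cdot x\ge c$ with $c\in[-R,R]$ is a cap of direction $u$, and $\lvert u\cdot\omega\rvert\le1$ makes the clamping vacuous), and that passing from the paper's half-open slab to your closed one changes the hyperplane count in \Cref{lem:lemma1} by at most one, which is absorbed in the constant.
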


We will prove \Cref{lem:lemma2} in the next section. We will try to find an $a \in \mathbb{Z}^d$ that optimises $\rvert a \lvert$ and $\phi$ simultaneously using Diophantine approximation. The main ingredient is the following.

\begin{theorem}(Dirichlet's theorem) \cite[Theorem 1A, p.27]{diophantine}.

 \label{dirichlet}
Let $\xi_i \in \mathbb{R}$ for $i=1,2 \ldots d$ , and $H\in \mathbb{N}$.
Then there exist $p_i \in \mathbb{Z}$ and $1\le q \le H^d$ such that 
\[\left\lvert \xi_i-\frac{p_i}{q}\right\rvert \le \frac{1}{qH} \;\;\;\;   \forall \; i=1,2 \ldots d.\]
\end{theorem}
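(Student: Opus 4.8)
The plan is to deduce the statement from the pigeonhole principle applied to the fractional parts of the multiples $q\xi_i$. First I would consider, for each integer $q$ with $0 \le q \le H^d$, the point in the unit cube $[0,1)^d$ whose $i$-th coordinate is the fractional part $\{q\xi_i\}$. This produces a collection of $H^d + 1$ points lying inside $[0,1)^d$.

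Next I would partition $[0,1)^d$ into $H^d$ axis-parallel subcubes of side length $1/H$, obtained by splitting each coordinate interval $[0,1)$ into the half-open pieces $[j/H,(j+1)/H)$ for $0 \le j \le H-1$. Since there are $H^d + 1$ points but only $H^d$ subcubes, the pigeonhole principle forces two distinct indices $q_1 < q_2$ whose associated points lie in the same subcube, so that $\lvert \{q_2\xi_i\} - \{q_1\xi_i\}\rvert < 1/H$ for every $i = 1,\ldots,d$.

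Finally I would set $q = q_2 - q_1$, so that $1 \le q \le H^d$, and $p_i = \lfloor q_2\xi_i\rfloor - \lfloor q_1\xi_i\rfloor \in \mathbb{Z}$. Writing $\{q_k\xi_i\} = q_k\xi_i - \lfloor q_k\xi_i\rfloor$ and subtracting the two expressions gives $q\xi_i - p_i = \{q_2\xi_i\} - \{q_1\xi_i\}$, whence $\lvert q\xi_i - p_i\rvert < 1/H$; dividing by $q$ yields the claimed bound $\lvert \xi_i - p_i/q\rvert \le 1/(qH)$ for all $i$.

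There is no serious obstacle here; the only point requiring mild care is the boundary convention for the subcubes, using half-open intervals so that the $H^d$ boxes genuinely tile $[0,1)^d$ and the coincidence of two points in one box produces a strict inequality, which in turn gives the non-strict bound appearing in the statement. Since the paper only invokes this classical result as an external ingredient, the pigeonhole argument above is intended merely to render the deduction self-contained.
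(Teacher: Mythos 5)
The paper does not prove this statement; it is quoted as an external classical result with a citation to Schmidt's book, so there is no internal proof to compare against. Your pigeonhole argument is the standard proof of Dirichlet's simultaneous approximation theorem and is correct and complete: the $H^d+1$ points $(\{q\xi_1\},\ldots,\{q\xi_d\})$ for $0\le q\le H^d$, the partition into $H^d$ half-open boxes of side $1/H$, and the subtraction step all work exactly as you describe, with the strict inequality $\lvert q\xi_i-p_i\rvert<1/H$ yielding the stated non-strict bound after dividing by $q$.
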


Note that $\left \lvert \frac{a}{\lvert a \rvert}-\beta \right\rvert=2\sin{(\phi/2)} \sim \phi \; \text{as} \;\; \phi \to 0$. The next lemma will provide a good simultaneous bound for $\lvert a \rvert$ and $\phi$.

\begin{lemma}
 \label{lem:lemma3}
For all $\beta \in \mathbb{R}^d$, $\lvert \beta \rvert=1$, and integers $H\ge 1$, there exists some $a \in \mathbb{Z}^d$ such that 
\[\vert a \rvert \ll H^{d-1} \;\; \text{and} \;\; \left\lvert \frac{a}{\vert a\rvert}-\beta \right\rvert \ll \frac{1}{\lvert a \rvert H},\]
where $\ll$ is understood with respect to $H$, and the implied constants are absolute.
\end{lemma}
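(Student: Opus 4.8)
The plan is to invoke \Cref{dirichlet}, but \emph{not} on the coordinates of $\beta$ directly: doing so would give $q \le H^d$ and hence only $|a| \ll H^d$. The trick that saves the extra factor of $H$ is to apply Dirichlet to the $d-1$ ratios of the coordinates against a dominant one. First I would permute coordinates (a permutation fixes both $\mathbb{Z}^d$ and the euclidean norm) so that $|\beta_d| = \max_i |\beta_i|$; since $|\beta| = 1$ this forces $|\beta_d| \ge 1/\sqrt{d}$, so in particular $\beta_d \neq 0$. Then I set $\xi_i = \beta_i/\beta_d$ for $i = 1, \dots, d-1$ and apply \Cref{dirichlet} to these $d-1$ reals, obtaining $p_1, \dots, p_{d-1} \in \mathbb{Z}$ and an integer $1 \le q \le H^{d-1}$ with $\lvert \beta_i/\beta_d - p_i/q \rvert \le 1/(qH)$ for each $i$.

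Next I would take as candidate the integer vector $a = \operatorname{sgn}(\beta_d)\,(p_1, \dots, p_{d-1}, q) \in \mathbb{Z}^d$. The size bound is then immediate: because $\beta_d$ is the dominant coordinate we have $\lvert \beta_i/\beta_d \rvert \le 1$, so $\lvert p_i \rvert \le q + 1/H \le q + 1$, whence $\lvert a \rvert^2 = q^2 + \sum_{i<d} p_i^2 \le d(q+1)^2 \ll q^2$ and thus $q \le \lvert a \rvert \ll q \le H^{d-1}$, giving $\lvert a \rvert \asymp q$ with absolute implied constants.

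For the angular estimate I would pass to the scaled vectors. Writing $e = (p_1/q - \beta_1/\beta_d, \dots, p_{d-1}/q - \beta_{d-1}/\beta_d, 0)$, the Dirichlet bound gives $\lvert e \rvert \le \sqrt{d-1}/(qH)$, and by construction $a = \operatorname{sgn}(\beta_d)\,q\,(\beta/\beta_d + e) = (q/\lvert \beta_d \rvert)(\beta + w)$ with $w = \beta_d e$ and $\lvert w \rvert \le \lvert e \rvert$, the scalar $q/\lvert \beta_d \rvert$ being strictly positive. Since a positive scalar does not change the associated unit vector, $a/\lvert a \rvert = u/\lvert u \rvert$ for $u = \beta + w$. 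I would then use the elementary perturbation bound $\lvert u/\lvert u \rvert - \beta \rvert \le 2\lvert w \rvert/\lvert u \rvert$, which follows from $u/\lvert u \rvert - \beta = (w + (1 - \lvert u \rvert)\beta)/\lvert u \rvert$ together with $\lvert 1 - \lvert u \rvert \rvert = \bigl\lvert \lvert u \rvert - \lvert \beta \rvert \bigr\rvert \le \lvert u - \beta \rvert = \lvert w \rvert$. For $H$ large (the regime of the asymptotic $\ll$) we get $\lvert w \rvert \le 1/2$, hence $\lvert u \rvert \ge 1/2$ and $\lvert a/\lvert a \rvert - \beta \rvert \le 4\lvert w \rvert \le 4\sqrt{d-1}/(qH)$. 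Finally, $\lvert a \rvert \ll q$ yields $1/q \ll 1/\lvert a \rvert$, so $\lvert a/\lvert a \rvert - \beta \rvert \ll 1/(qH) \ll 1/(\lvert a \rvert H)$, as claimed.

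The step I expect to be the main obstacle is the bookkeeping in this last conversion from coordinatewise approximation of the ratios to the direction estimate: one must track the sign factor $\operatorname{sgn}(\beta_d)$ so that $a$ genuinely points along $\beta$ rather than $-\beta$, verify that the scalar relating $a$ to $\beta + w$ is positive, and — crucially — obtain the denominator $\lvert a \rvert$ (not $q$) in the final bound, which is precisely where the equivalence $\lvert a \rvert \asymp q$ coming from normalising by the dominant coordinate is used. One should also note explicitly that the bound $\lvert u \rvert \ge 1/2$ only holds once $H$ is large enough, which is consistent with $\ll$ being understood with respect to $H$.
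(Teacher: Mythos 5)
Your proposal is correct and follows essentially the same route as the paper: Dirichlet's theorem applied to the $d-1$ ratios $\beta_i/\beta_{\max}$, the integer vector built from $(q,p_2,\dots,p_d)$ with $|a|\asymp q\ll H^{d-1}$, and a perturbation bound (the paper's \Cref{lem:lemma4}, which rescales $a$ by $\beta_{\max}/q$ and compares unit vectors) to get the angular estimate with denominator $|a|$. Your explicit $\operatorname{sgn}(\beta_d)$ factor is in fact a small improvement in care over the paper, which writes $d=\frac{\beta_1}{q}a$ and asserts $\frac{d}{|d|}=\frac{a}{|a|}$ without noting that this requires $\beta_1>0$ (otherwise one replaces $a$ by $-a$).
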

We will prove \Cref{lem:lemma3} in the next section.

\begin{proof}[Proof of \Cref{mainprop} assuming all the lemmas]
We follow the proof of the $3$-dimensional case by Maffucci \cite[Prop. 6.2]{maffucci}. Given a segment with direction $\beta$ we choose $a\in \mathbb{Z}^d$ satisfying  \Cref{lem:lemma3}, for some $H$ to be determined. Then one has
 \begin{equation*}
 \left\lvert \beta-\frac{a}{\lvert a\rvert} \right\rvert=2\sin(\phi/2)\sim \phi \;\;\;\; \text{as} \; \phi \to 0.
 \end{equation*}
Later we will choose $H$ so that $\theta \to 0$ implies $\phi \to 0$, in which case by \Cref{lem:lemma2} gives 
\begin{equation} \label{hola}
\psi \ll \kappa_d(R)\left(1+R\lvert a \rvert \theta + R\lvert a \rvert \left \lvert \beta-\frac{a}{\lvert a\rvert} \right \rvert \right)  \text{ as } \theta \to 0.
\end{equation}
It follows from \Cref{lem:lemma3} that
\begin{equation} \label{eq2.2}
\lvert a \rvert \theta + \lvert a \rvert \left\lvert \beta-\frac{a}{\lvert a\rvert} \right\rvert \ll \theta H^{d-1}+\frac{1}{H}.
\end{equation}
Setting $H=\left\lceil \theta^{-1/d}\right\rceil$ in \eqref{eq2.2}, so that $H=\mathcal{O}(\theta^{-1/d})$, we obtain
\begin{equation*}
\lvert a \rvert \theta + \lvert a \rvert \left \lvert \beta-\frac{a}{\lvert a\rvert} \right \rvert \ll \theta^{1/d}.
\end{equation*}
Therefore, since the implied constants in \eqref{hola} and \eqref{eq2.2} are absolute, if follows that
\begin{equation*}
\psi \ll \kappa_d(R)(1+R\theta^{1/d}) \text{ as } \theta \to 0.
\end{equation*}
To complete the proof we have that for our choice of $H$,
\begin{equation*}
2\sin(\phi/2)= \left\lvert \beta-\frac{a}{\lvert a\rvert} \right\rvert \ll \frac{1}{\lvert a \rvert H} \le \frac{1}{H} \sim \theta^{1/d} \to 0 \;\; \text{as} \; \theta \to 0.
\end{equation*}
Therefore $\phi \to 0$ as $\theta \to 0$, because $0\le \phi \le \pi$.
\end{proof}

\section{Proofs of the Lemmas}

\begin{proof}[Proof of \Cref{lem:lemma1}]
Since $b \in \mathbb{Z}^d$, all lattice points in the segment lie on a (rational) hyperplane of the form $b\cdot x=n\in \mathbb{Z}$. The distance between planes is 

\[
\left\lvert \frac{nb}{\lvert b \rvert^2} - \frac{(n+1)b}{\lvert b \rvert^2} \right\rvert=\frac{1}{\lvert b\rvert}
\]

 \noindent and there are at most 
\[1+\left\lfloor \frac{h}{1/ \lvert b \rvert}\right\rfloor \le 1+\lvert b \rvert h\]

\noindent hyperplanes intersecting the segment, each of them with at most $\kappa_d(R)$ lattice points in them; thus 
 the lemma follows.
\end{proof}

A slightly different proof of \Cref{lem:lemma1} when $d=3$ can be found in \cite[Proposition 6.3]{maffucci}.

\begin{proof}[Proof of \Cref{lem:lemma2}]
Let $S=T_1\setminus T_2$ with respective radii $r_1$ and $r_2$. We will construct $S'=T_1'\setminus T_2'$ having direction proportional to $a \in \mathbb{Z}^d$, such that $S\subseteq S'$, then $\psi$ can be bounded above using \Cref{lem:lemma1} on $S'$. 
For this it will suffice to construct $S'$ so that $T_2'\subseteq T_2$ and $T_1\subseteq T_1'$. Let $r'_1,r'_2$ be the radii of $T'_1$ and $T'_2$ respectively. We claim that taking 
\begin{equation*} 
r_2' = r_2-2R\sin(\phi/2) 
\end{equation*} and
\begin{equation*}
 r_1'^2 =r_1^2+4Rr_1\sin(\phi/2)+4R^2\sin(\phi/2)^2
\end{equation*}
 satisfies the conditions. A point inside $T_2'$ is of the form $R\frac{a}{\lvert a \rvert}+ v$ with $\lvert v \rvert \le r_2'$, hence
\begin{equation*}
\begin{aligned}
\left\lvert R\beta -\left(R\frac{a}{\lvert a \rvert}+v\right)\right\rvert^2 &=R^2\left\lvert \beta-\frac{a}{\lvert a \rvert}\right\rvert^2+\lvert v\rvert^2+2Rv\cdot\left(\beta-\frac{a}{\lvert a \rvert}\right) \\
&\le 4R^2\sin(\phi/2)^2+\lvert v \rvert^2+2R\lvert v\rvert \left\lvert \beta-\frac{a}{\lvert a \rvert}\right\rvert \\
&\le 4R^2\sin(\phi/2)^2+r_2'^2+4R\sin(\phi/2)r_2'=r_2^2,
\end{aligned}
\end{equation*}
where we have used that $\left\lvert \beta -\frac{a}{\lvert a \rvert}\right\rvert=2\sin(\phi/2)$. This shows that $T_2'\subseteq T_2$. Now let $R\beta+v$ with $\lvert v \lvert\le r_1$ be a point of $T_1$. We have
\begin{equation*}
\begin{aligned}
\left\lvert \left(R\beta+v\right) -R\frac{a}{\lvert a \rvert} \right\rvert^2 &\le 4R^2\sin(\phi/2)^2+\lvert v \rvert^2+2R\lvert v\rvert \left\lvert \beta-\frac{a}{\lvert a \rvert}\right\rvert \\
&\le 4R^2\sin(\phi/2)^2+r_1^2+4Rr_1\sin(\phi/2)=r_1'^2,
\end{aligned}
\end{equation*}
so that $T_1\subseteq T_1'$ as desired. Now, according to \eqref{height}, the height of $S'$ is 
\begin{equation*}
h'=\frac{1}{2R}(r_1'^2-r_2'^2)=\frac{1}{2R}\left(r_1^2-r_2^2+4R\sin(\phi/2)(r_1+r_2)\right).
\end{equation*}
Recall that $r_i=2R\sin(\theta_i/4)$ \eqref{rtheta}, and $\theta=\theta_1-\theta_2$. Therefore,
\begin{equation} \label{h'}
h'=2R(\sin(\theta_1/4)^2-\sin(\theta_2/4)^2)+4R\sin(\phi/2)(\sin(\theta_1/4)+\sin(\theta_2/4)).
\end{equation}
\noindent Now, $\theta_1/4=\theta_2/4+\theta/4$, so that using the Taylor expansion of $\sin(x)$ yields
\begin{equation*} 
\sin(\theta_1/4)=\sin(\theta_2/4)\cos(\theta/4)+\cos(\theta_2/4)\sin(\theta/4)\frac{\theta}{4}+\mathcal{O}(\theta^2),
\end{equation*}
implying
\begin{equation} \label{theta}
\sin(\theta_1/4)^2=\sin(\theta_2/4)^2+\sin(\theta_2/2)\frac{\theta}{4}+\mathcal{O}(\theta^2).
\end{equation}

\noindent It follows from \eqref{h'} and \eqref{theta} that
\begin{equation} \label{expression}
h'=2R\left(\sin(\theta_2/2)\frac{\theta}{4}+2\sin(\phi/2)\left(\sin(\theta_1/4)+\sin(\theta_2/4)\right)+\mathcal{O}(\theta^2)\right).
\end{equation}
We have that $\sin(\phi/2)\le \phi/2$, and all the other sines in \eqref{expression} are bounded above by $1$, thus 
\begin{equation*}
h'\ll R(\theta+\phi)
\end{equation*}
as $R$ tends to infinity and $\theta$ tends to $0$, and the implied constant is absolute. By \Cref{lem:lemma1}, the number of lattice points in $S'$ is no greater than
\begin{equation*}
\kappa_d(R)(1+\lvert a\rvert h')\ll \kappa_d(R)(1+R\lvert a \rvert(\theta+\phi)),
\end{equation*}
 which is an upper bound for $\psi$ since $S\subseteq S'$.
\end{proof}
We now prove an auxiliary lemma, which will be used in the proof of \Cref{lem:lemma3}.
\begin{lemma}
\label{lem:lemma4}
Let $\alpha$, $\beta$ be two non-zero vectors of $\mathbb{R}^n$. Then
\[\left\lvert \frac{\alpha}{\lvert \alpha \lvert}-\frac{\beta}{\lvert \beta \rvert} \right\rvert \le 2\frac{\lvert \alpha-\beta \rvert}{\lvert \alpha \rvert}. \]

\end{lemma}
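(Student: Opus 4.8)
The plan is to prove the inequality $\left\lvert \frac{\alpha}{\lvert \alpha \rvert}-\frac{\beta}{\lvert \beta \rvert} \right\rvert \le 2\frac{\lvert \alpha-\beta \rvert}{\lvert \alpha \rvert}$ by a direct triangle-inequality argument, inserting an intermediate vector to separate the two distinct effects at play: the difference in \emph{direction} and the difference in \emph{magnitude} between $\alpha$ and $\beta$. Specifically, I would use $\frac{\beta}{\lvert \alpha \rvert}$ as a bridge, writing
\[
\frac{\alpha}{\lvert \alpha \rvert}-\frac{\beta}{\lvert \beta \rvert}=\left(\frac{\alpha}{\lvert \alpha \rvert}-\frac{\beta}{\lvert \alpha \rvert}\right)+\left(\frac{\beta}{\lvert \alpha \rvert}-\frac{\beta}{\lvert \beta \rvert}\right),
\]
so that the triangle inequality gives an upper bound by the sum of the two norms.

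The first term is immediate: $\left\lvert \frac{\alpha}{\lvert \alpha \rvert}-\frac{\beta}{\lvert \alpha \rvert}\right\rvert=\frac{\lvert \alpha-\beta \rvert}{\lvert \alpha \rvert}$, which already accounts for one of the two copies of $\frac{\lvert \alpha-\beta \rvert}{\lvert \alpha \rvert}$ on the right-hand side. The plan is then to show that the second term is bounded by the same quantity, i.e.
\[
\left\lvert \frac{\beta}{\lvert \alpha \rvert}-\frac{\beta}{\lvert \beta \rvert}\right\rvert=\lvert \beta \rvert\,\frac{\bigl\lvert \lvert \beta \rvert-\lvert \alpha \rvert\bigr\rvert}{\lvert \alpha \rvert\,\lvert \beta \rvert}=\frac{\bigl\lvert \lvert \alpha \rvert-\lvert \beta \rvert\bigr\rvert}{\lvert \alpha \rvert}.
\]

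The key step, and the only place any real inequality is used, is the reverse triangle inequality $\bigl\lvert \lvert \alpha \rvert-\lvert \beta \rvert\bigr\rvert \le \lvert \alpha-\beta \rvert$, which bounds the numerator of the second term by $\lvert \alpha-\beta \rvert$ and hence bounds the whole term by $\frac{\lvert \alpha-\beta \rvert}{\lvert \alpha \rvert}$. Adding the two estimates yields the factor of $2$ in the claimed bound. I do not anticipate a genuine obstacle here, since the argument is a two-line application of the triangle inequality together with its reverse form; the only point requiring minor care is the choice of $\lvert \alpha \rvert$ (rather than $\lvert \beta \rvert$) as the common denominator in the bridge vector, which is what makes both resulting terms share the denominator $\lvert \alpha \rvert$ appearing on the right-hand side of the statement.
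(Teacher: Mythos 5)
Your proposal is correct and is essentially the paper's own argument: the paper factors out $\tfrac{1}{\lvert \alpha \rvert}$ and splits $\alpha-\tfrac{\lvert \alpha\rvert}{\lvert\beta\rvert}\beta$ at the intermediate point $\beta$, which after dividing by $\lvert\alpha\rvert$ is exactly your bridge vector $\tfrac{\beta}{\lvert\alpha\rvert}$, and then concludes with the same reverse triangle inequality. No substantive difference.
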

\begin{proof}
This is an easy application of the triangle inequality. We have
\begin{equation*} 
\left\lvert \frac{\alpha}{\lvert \alpha \lvert}-\frac{\beta}{\lvert \beta \rvert} \right\rvert = \frac{1}{\lvert \alpha \rvert} \left \lvert \alpha -\frac{\lvert \alpha \rvert}{\lvert \beta \rvert}\beta \right\rvert
\end{equation*}
and
\begin{equation*}
\left \lvert \alpha -\frac{\lvert \alpha \rvert}{\lvert \beta \rvert}\beta \right\rvert \le \lvert \alpha-\beta \rvert + \left\lvert \beta -\frac{\lvert \alpha \rvert}{\lvert \beta \rvert}\beta \right\rvert=\lvert \alpha-\beta\rvert + \left\lvert \lvert \beta \lvert -\lvert \alpha \rvert \right \rvert \le 2\lvert \alpha-\beta\rvert.
\end{equation*}

\end{proof}
Next, we prove \Cref{lem:lemma3}.

\begin{proof}[Proof of \Cref{lem:lemma3}]
Without loss of generality we assume that $\lvert \beta_1 \rvert=\max( \lvert \beta_i \rvert \;  i=1,2\ldots d)$. Define $\xi_i=\frac{\beta_i}{\beta_1}$ for $i=2,3\ldots d$, so that $\lvert \xi_i \rvert \le 1$ for all $i$. 
By Dirichlet's theorem (\Cref{dirichlet}) there exist $1\le q\le H^{d-1}$ and $p_i \in \mathbb{Z}$ for $i=2,3,\ldots,d$ such that 
\begin{equation} \label{di}
\left\lvert \xi_i -\frac{p_i}{q} \right\rvert \le \frac{1}{qH} \;\; \text{for} \; i=2,3,\ldots,d.
\end{equation}
 Let $a=(q,p_2,\ldots,p_d)$. We have $\left\lvert \frac{p_i}{q}\right\rvert \le 1+\frac{1}{qH}\le 2$, so that $\lvert p_i \rvert \le 2q$. Hence
\begin{equation} \label{bound1}
\vert a \rvert^2=q^2+p_2^2+\ldots+p_d^2\le (4d-3)q^2 \; \implies \vert a \rvert \le (4d-3)^{1/2}q\ll H^{d-1}.
\end{equation}

\noindent Now let $d=\frac{\beta_1}{q}a$, thus $\frac{d}{\lvert d \rvert}=\frac{a}{\lvert a \rvert}$, so that by \Cref{lem:lemma4} and \eqref{di}
\begin{equation*}
\left\lvert \beta-\frac{a}{\lvert a\rvert} \right\rvert=\left\lvert \beta-\frac{d}{\lvert d\rvert}\right\rvert \le 2\lvert \beta -d \lvert =2\lvert \beta_1\rvert \left(\sum_{i=2}^{d}{\left(\xi_i-\frac{p_i}{q}\right)^2}\right)^{1/2} \le \frac{2\sqrt{d-1}}{qH}.
\end{equation*}
 Since $\lvert a \rvert \le (4d-3)^{1/2}q$, it follows that
 \begin{equation} \label{3.5}
 \left\lvert \beta-\frac{a}{\lvert a\rvert} \right\rvert \le \frac{2(4d^2-7d+3)^{1/2}}{\lvert a \rvert H}\ll \frac{1}{\lvert a\rvert H}.
 \end{equation}
 Both implied constants in \eqref{bound1} and \eqref{3.5} are absolute.

\end{proof}

\section{Rational quotients: proof of Theorem 1.5}

We now prove a generalisation of \Cref{lem:lemma3} for the case when the direction of the segment has rational quotients (recall \Cref{def3}). We focus on the case when the number of rational quotients $s$ is less than or equal to $d-2$. For $s=d-1$ there exists a non-zero $k$ such that $k\beta \in \mathbb{Z}^d$. Then by \Cref{lem:lemma1}
\[
\psi \ll_\beta \kappa_d(R)(1+h).
\]
Arguing as in the proof of \Cref{lem:lemma2} we find that
\[
h=2R(\sin(\theta_1/4)^2-\sin(\theta_2/4)^2)=2R(\sin(\theta_2/4)\frac{\theta}{4} + \mathcal{O}(\theta^2))\sim R\theta \text{ as } \theta \to 0
\]
Thus, this gives the same bound as \Cref{thm1.2}, and no Diophantine approximation is needed. 
\begin{lemma} \label{cor1} 
Let $\beta \in \mathbb{R}^d$, $\lvert \beta \rvert=1$, have $s$ rational quotients, $1\le s \le d-2$, and let $H\ge 1$ be an integer. Then there exists $a \in \mathbb{Z}^d$ such that
\[ \lvert a \rvert \ll_\beta H^{d-1-s} \;\; \text{and} \;\; \left\lvert \frac{a}{\lvert a\rvert}-\beta \right\rvert \ll_\beta \frac{1}{\lvert a\rvert H},\]
where $\ll_\beta$ is understood with respect to $H$.

\begin{proof}
Let $k\in \mathbb{R}\setminus \{0\}$ such that $k\beta$ has $s+1$ rational coordinates. Define $\xi_i=k\beta_i$ for $i=1,2,\ldots d$, so that we have $\lvert \xi_i \rvert \le \lvert k \rvert$ for all $i$. 
 Without loss of generality assume that $\xi_i$ are rational for $i=1,2,\ldots,s+1$. Then
 \begin{equation*}
\xi_i=\frac{m_i}{n_i } \in \mathbb{Q} \;\;\; \text{for} \;\;\; 1\le i\le s+1.
 \end{equation*}
 By Dirichlet's theorem (\Cref{dirichlet}) there exist $1\le q'\le H^{d-1-s}$ and $p_i \in \mathbb{Z}$ such that
 \begin{equation} \label{4.1}
 \left\lvert \xi_i -\frac{p_i}{q'} \right\rvert \le \frac{1}{q'H} \;\;\; \text{for} \;\;\; s+2 \le i \le d.
 \end{equation}
Let $m=\prod_{i=1}^{s+1}{n_i}$, $q=q'm$, and $a=\left(\frac{qm_1}{n_1},\frac{qm_2}{n_2}\ldots ,\frac{qm_{s+1}}{n_{s+1}},mp_{s+2},\ldots,mp_d\right) \in \mathbb{Z}^d$. Then
 \begin{equation} \label{4.2}
 \lvert a \rvert=q \left( \sum_{i=1}^{s+1} \xi^2_i +\sum_{i=s+2}^{d} \frac{p^2_i}{q'^2} \right)^{1/2} \le (1+\lvert k \rvert )dq\le (1+\lvert k \rvert)dmH^{d-1-s}.
 \end{equation}
 \noindent Now let $d=\frac{1}{kq}a$, then $\frac{d}{\lvert d \rvert}=\frac{a}{\lvert a \rvert}$, so that by \Cref{lem:lemma4} and \eqref{4.1}
 \begin{equation*}
 \begin{aligned}
 &\left\lvert \beta-\frac{a}{\lvert a\rvert} \right\rvert=\left\lvert \beta-\frac{d}{\lvert d\rvert}\right\rvert \le 2\lvert \beta -d \lvert \\
 &=\frac{2}{\lvert k\rvert} \left(\sum_{i=2}^{s+1}{\left(\xi_i-\frac{m_i}{n_i}\right)^2}+\sum_{i=s+2}^{d}{\left(\xi_i-\frac{p_i}{q'}\right)^2}\right)^{1/2} \le \frac{2}{\lvert k\rvert} \frac{\sqrt{d-1-s}}{q'}\frac{1}{H}.
 \end{aligned}
 \end{equation*}
 Therefore, using \eqref{4.2}, we obtain
 \begin{equation*}
 \left\lvert \beta-\frac{a}{\lvert a\rvert} \right\rvert \le \frac{2(1+\lvert k\rvert)dm\sqrt{(d-1-s)}}{\lvert k \rvert \lvert a \rvert H}\ll_\beta \frac{1}{\lvert a\rvert H}.
 \end{equation*}
\end{proof}
\end{lemma}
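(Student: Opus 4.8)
The plan is to mimic the proof of \Cref{lem:lemma3}, but to exploit the $s+1$ rational coordinates of $k\beta$ so that only $d-1-s$ coordinates require genuine Diophantine approximation; this is precisely what reduces the exponent of $H$ from $d-1$ to $d-1-s$. I would fix a nonzero $k$ for which $k\beta$ has exactly $s+1$ rational coordinates and set $\xi_i=k\beta_i$, so that $\lvert\xi_i\rvert\le\lvert k\rvert$. After reordering I may assume $\xi_1,\ldots,\xi_{s+1}$ are rational, say $\xi_i=m_i/n_i$, while $\xi_{s+2},\ldots,\xi_d$ are the $d-1-s$ remaining (irrational) entries.

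First I would apply \Cref{dirichlet} to the $d-1-s$ reals $\xi_{s+2},\ldots,\xi_d$, obtaining $1\le q'\le H^{d-1-s}$ and integers $p_i$ with $\lvert\xi_i-p_i/q'\rvert\le 1/(q'H)$. The subtlety is that $q'\xi_i$ need not be an integer for $i\le s+1$, so to produce an honest lattice vector I would clear the denominators of the rational quotients by multiplying through by $m=\prod_{i=1}^{s+1}n_i$. Setting $q=q'm$ and
\[
a=\left(\frac{qm_1}{n_1},\ldots,\frac{qm_{s+1}}{n_{s+1}},\,mp_{s+2},\ldots,mp_d\right),
\]
every coordinate is an integer: for $i\le s+1$ the entry equals $q'm_i\prod_{j\neq i}n_j$, and the remaining entries are visibly integral.

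Next I would bound $\lvert a\rvert$. Factoring out $q$, the norm is $q\left(\sum_{i\le s+1}\xi_i^2+\sum_{i\ge s+2}(p_i/q')^2\right)^{1/2}$; since $\lvert\xi_i\rvert\le\lvert k\rvert$ and $\lvert p_i/q'\rvert\le\lvert\xi_i\rvert+1/(q'H)\le\lvert k\rvert+1$, this is $\ll_\beta q=q'm\le mH^{d-1-s}\ll_\beta H^{d-1-s}$, where $m$ and $k$ depend only on $\beta$ and are absorbed into the implied constant. For the angular bound I would pass to $\mathbf d=\tfrac{1}{kq}a$, which has the same direction as $a$, and observe the crucial cancellation: its first $s+1$ coordinates are exactly $\beta_1,\ldots,\beta_{s+1}$, so $\beta-\mathbf d$ is supported on the last $d-1-s$ coordinates, where it equals $\tfrac{1}{k}(\xi_i-p_i/q')$. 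Applying \Cref{lem:lemma4} with $\lvert\beta\rvert=1$ gives $\bigl\lvert\beta-a/\lvert a\rvert\bigr\rvert\le 2\lvert\beta-\mathbf d\rvert\le\tfrac{2\sqrt{d-1-s}}{\lvert k\rvert q'H}$, and then substituting $1/q'\ll_\beta 1/\lvert a\rvert$ from the norm estimate converts this into $\ll_\beta 1/(\lvert a\rvert H)$.

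I expect the only genuine obstacle to be bookkeeping: ensuring that $a$ lands in $\mathbb Z^d$ after clearing denominators while still controlling $\lvert a\rvert$, and tracking how the $\beta$-dependent constants $k$ and $m$ propagate through both estimates. The conceptual heart, namely that matching the rational coordinates exactly removes them from the approximation problem, is clean, and the remainder is the same triangle-inequality argument used for \Cref{lem:lemma3}.
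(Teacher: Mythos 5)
Your proposal is correct and follows essentially the same route as the paper's own proof: the same choice of $k$ and $\xi_i$, the same application of Dirichlet's theorem to only the $d-1-s$ non-rational quotients, the same denominator-clearing via $m=\prod n_i$ to land in $\mathbb{Z}^d$, and the same passage to $\frac{1}{kq}a$ combined with \Cref{lem:lemma4}. The observation you highlight --- that matching the rational coordinates exactly makes $\beta-\frac{1}{kq}a$ supported on the last $d-1-s$ entries --- is exactly the cancellation the paper uses.
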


This enables us to prove \Cref{thm1.2}, in essentially the same way as \Cref{mainprop}.

\begin{proof}[Proof of \Cref{thm1.2}]
In view of the paragraph at the beginning of the section we can assume that $1\le s \le d-2$. Given a segment with direction $\beta$ having $s$ rational quotients, we choose $a\in \mathbb{Z}^d$ satisfying  \Cref{cor1}, for some $H$ to be determined. Then
 \begin{equation*}
 \left\lvert \beta-\frac{a}{\lvert a\rvert} \right\rvert=2\sin(\phi/2)\sim \phi \;\;\;\; \text{as} \; \phi \to 0.
 \end{equation*}
Later we will choose $H$ so that $\theta \to 0$ implies $\phi \to 0$, in which case \Cref{lem:lemma2} gives
\begin{equation*}
\psi \ll \kappa_d(R)\left(1+R\lvert a \rvert \theta + R\lvert a \rvert \left \lvert \beta-\frac{a}{\lvert a\rvert} \right \rvert \right) \text{ as } \theta \to 0.
\end{equation*}
From \Cref{cor1}, it follows that
\begin{equation} \label{hey}
\lvert a \rvert \theta + \lvert a \rvert \left\lvert \beta-\frac{a}{\lvert a\rvert} \right\rvert \ll_\beta \theta H^{d-1-s}+\frac{1}{H}.
\end{equation}
Setting $H=\left \lceil \theta^{\frac{-1}{d-s}} \right \rceil$ in \eqref{hey}, so that $H=\mathcal{O}\left(\theta^{\frac{-1}{d-s}}\right)$, we obtain
\begin{equation*}
\lvert a \rvert \theta + \lvert a \rvert \left\lvert \beta-\frac{a}{\lvert a\rvert} \right \rvert \ll_\beta \theta^{\frac{1}{d-s}}.
\end{equation*}
Therefore,
\begin{equation*}
\psi \ll_\beta \kappa_d(R)\left(1+R\theta^{\frac{1}{d-s}}\right) \text{ as } \theta \to 0.
\end{equation*}
Finally, for our choice of $H$ we obtain
\begin{equation*}
2\sin(\phi/2)= \left\lvert \beta-\frac{a}{\lvert a\rvert} \right\rvert \ll_\beta \frac{1}{\lvert a \rvert H} \le \frac{1}{H} \sim \theta^{\frac{1}{d-s}} \to 0 \;\; \text{as} \; \theta \to 0.
\end{equation*}
Thus $\phi \to 0$ as $\theta \to 0$ since $0\le \phi \le \pi$.
\end{proof}

\renewcommand{\abstractname}{Acknowledgements}
\begin{abstract}
Thanks to Riccardo Maffucci for being my supervisor in this project, who has been indispensable in every sense. Thanks also to the LMS and Oxford University Math Institute for providing funding for this project. I would also like to thank Ze\'{e}v Rudnick and R. Heath-Brown for generously answering my questions about their work.
\end{abstract}

\medskip

\bibliographystyle{plain}
\bibliography{refs}

\begin{thebibliography}{10}

\bibitem{benatar-maffucci}
Jacques Benatar and Riccardo~W Maffucci.
\newblock {Random Waves On $\mathbb{T}^3$: Nodal Area Variance and Lattice
  Point Correlations}.
\newblock {\em International Mathematics Research Notices},
  2019(10):3032--3075, 09 2017.

\bibitem{rudnick}
Jean Bourgain and Ze\'{e}v Rudnick.
\newblock Restriction of toral eigenfunctions to hypersurfaces and nodal sets.
\newblock {\em Geometric and Functional Analysis}, 22:878--937, 2012.

\bibitem{BSR}
Jean {Bourgain}, Peter {Sarnak}, and Ze{\'e}v {Rudnick}.
\newblock {Local statistics of lattice points on the sphere}.
\newblock {\em arXiv e-prints}, page arXiv:1204.0134, Mar 2012.

\bibitem{bourgain_gauss}
Jean {Bourgain} and Nigel {Watt}.
\newblock {Mean square of zeta function, circle problem and divisor problem
  revisited}.
\newblock {\em arXiv e-prints}, page arXiv:1709.04340, Sep 2017.

\bibitem{cilleruelocordoba}
Javier Cilleruello and Antonio Cordoba.
\newblock Trigonometric polynomials and lattice points.
\newblock {\em Proceedings of the American Mathematical Society}, 114(4), 1992.

\bibitem{cilleruelogranville}
Javier Cilleruelo and Andrew Granville.
\newblock Close lattice points on circles.
\newblock {\em Canadian Journal of Mathematics-journal Canadien De
  Mathematiques - CAN J MATH}, 61, 12 2009.

\bibitem{Duke1988}
W.~Duke.
\newblock Hyperbolic distribution problems and half-integral weight {M}aass
  forms.
\newblock {\em Inventiones mathematicae}, 92(1):73--90, Feb 1988.

\bibitem{grosswald}
Emil Grosswald.
\newblock {\em Representations of Integers as Sums of Squares}.
\newblock Springer-Verlag, 1985.

\bibitem{hardy-wright}
G.H. Hardy and E.M. Wright.
\newblock {\em An introduction to the theory of numbers}.
\newblock Oxford University Press, 2008.

\bibitem{heathbrownsphere}
R.~Heath-Brown.
\newblock Lattice points in the sphere.
\newblock {\em Number theory in progress}, (2), 1999.

\bibitem{jarnik}
V.~Jarnik.
\newblock Uber die gitterpunkte auf konvexen kurven.
\newblock {\em Math. Z}, 24, 1926.

\bibitem{wigman}
Manjunath {Krishnapur}, P\"ar {Kurlberg}, and Igor {Wigman}.
\newblock {Nodal length fluctuations for arithmetic random waves.}
\newblock {\em {Ann. Math. (2)}}, 177(2):699--737, 2013.

\bibitem{maffucci}
Riccardo~W. Maffucci.
\newblock Nodal intersections for random waves against a segment on the
  $3$-dimensional torus.
\newblock {\em Journal of Functional Analysis}, 272(12):5218--5254, 2017.

\bibitem{leray}
Ferenc Oravecz, Ze\'ev Rudnick, and Igor Wigman.
\newblock The {L}eray measure of nodal sets for random eigenfunctions on the
  torus.
\newblock {\em Annales de l'Institut Fourier}, 58(1):299--335, 2008.

\bibitem{rossi-wigman}
Maurizia {Rossi} and Igor {Wigman}.
\newblock {Asymptotic distribution of nodal intersections for arithmetic random
  waves}.
\newblock {\em Nonlinearity}, 31(10):4472, Oct 2018.

\bibitem{diophantine}
Wolfgang~M. Schimdt.
\newblock {\em Diophantine approximation}.
\newblock Springer-Verlag, 2 edition, 1993.

\bibitem{szego}
G.~Szego.
\newblock Beitrage zur theorie de laguerreschen polynome.
\newblock {\em Zahlentheoretische Anwendungen}, 1926.

\bibitem{walfisz}
A.~Walfisz.
\newblock On the class-number of binary quadratic forms.
\newblock {\em Math. Tbilissi}, 11, 1942.

\end{thebibliography}

\end{document}